\newtheorem{theorem}{Theorem}[section]
\newtheorem{lemma}[theorem]{Lemma}
\newtheorem{proposition}[theorem]{Proposition}
\newtheorem{example}[theorem]{Example}
\def\erre{{\rm I\!R}}
\def\RR{{\rm I\!R}}
\def\R{{\rm I\!R}}
\def\enne{{\rm I\!N}}
\def\meas{\mathop{\rm meas}}
\def\R{{\rm I\!R}}
\def\R{{\rm I\!R}}
\def\N{{\rm I\!N}}
\def\phi{\varphi}
\def\meas{\mathop{\rm meas}}
\title[Sequences of weak solutions for...]{Sequences of weak solutions for fractional equations}
\author{Giovanni Molica Bisci}
\address[G. Molica Bisci]{Dipartimento P.A.U., Universit\`a  degli
Studi ``Mediterranea" di Reggio Calabria, Salita Melissari - Feo di
Vito, 89100 Reggio Calabria, Italy} \email{gmolica@unirc.it}
\thanks{{\it 2010 Mathematics Subject Classification.} Primary:
49J35, 35S15;
Secondary: 47G20, 45G05.}
\keywords{Nonlocal problems; Fractional Equations; Mountain Pass Theorem.}
\thanks{{The paper is realized with the auspices of the GNAMPA Project 2013 entitled: {\it Problemi non-locali di tipo Laplaciano frazionario}}}
\begin{document}

\begin{abstract}
This work is devoted to study the existence of infinitely many weak solutions to nonlocal equations involving a general
integrodifferential operator of fractional type.
These equations have a
variational structure and we find a sequence of nontrivial weak solutions for them
exploiting the ${\mathbb{Z}}_2$-symmetric version of the Mountain Pass Theorem.  To make the nonlinear methods work, some
careful analysis of the fractional spaces involved is necessary. As a particular case, we derive an
existence theorem for the fractional Laplacian, finding nontrivial
solutions of the equation
$$ \left\{
\begin{array}{ll}
(-\Delta)^s u=f(x,u) & {\mbox{ in }} \Omega\\
u=0 & {\mbox{ in }} \erre^n\setminus \Omega.
\end{array} \right.
$$
As far as we know, all these results are new and represent a fractional version of classical theorems obtained working with Laplacian equations.
\end{abstract}
\maketitle
\section{Introduction}
In this paper we obtain an existence result for the following nonlocal problem:
\begin{equation} \tag{$P_{K}^{f}$} \label{Nostro}
\left\{\begin{array}{ll}
-\mathcal L_K u=f(x, u) & \mbox{in}\,\,\,\Omega\\
u=0 & \mbox{in}\,\,\,\erre^n\setminus\Omega.
\end{array}\right.
\end{equation}

Here and in the sequel, $\Omega$ is a bounded domain in $({\R}^{n},|\cdot|)$ with $n>2s$ (where $s\in (0,1)$) and with smooth (Lipschitz)
boundary $\partial \Omega$, $f:\bar\Omega\times\erre\rightarrow \erre$ is a suitable continuous function with subcritical growth and $\mathcal L_K$ is the nonlocal operator defined as follows:
\begin{equation*}
\mathcal L_Ku(x):=
\int_{\erre^n}\Big(u(x+y)+u(x-y)-2u(x)\Big)K(y)dy,
\,\,\, (x\in \erre^n)
\end{equation*}
where $K:\erre^n\setminus\{0\}\rightarrow(0,+\infty)$ is a function with the properties that:
\begin{itemize}
\item[$(\rm k_1)$] $\gamma K\in L^1(\erre^n)$, \textit{where} $\gamma(x)=\min \{|x|^2, 1\}$;
\item[$(\rm k_2)$] \textit{There exists} $\lambda>0$
\textit{such that} $$K(x)\geq \lambda |x|^{-(n+2s)},$$
{\textit{for any}} $x\in \erre^n \setminus\{0\}$;
\item[$(\rm k_3)$] $K(x)=K(-x)$, \textit{for any} $x\in \erre^n \setminus\{0\}$.
\end{itemize}

\indent A typical example for the kernel $K$ is given by $K(x):=|x|^{-(n+2s)}$. In this case $\mathcal L_K$ is the fractional Laplace operator defined as
$$
-(-\Delta)^s u(x):=
\int_{\erre^n}\frac{u(x+y)+u(x-y)-2u(x)}{|y|^{n+2s}}\,dy,
\,\,\,\,\, x\in \erre^n.
$$

\indent Recently, a great attention has been focused on the study of
fractional and nonlocal operators of elliptic type, both for the
pure mathematical research and in view of concrete real-world
applications. This type of operators arises in a quite natural way
in many different contexts, such as, among the others,
the thin obstacle problem,
optimization,
finance, phase transitions,
stratified materials,
anomalous diffusion, crystal dislocation,
soft thin films,
semipermeable membranes, flame propagation,
conservation laws,
ultra-relativistic
limits of quantum mechanics,
quasi-geostrophic flows,
multiple scattering,
minimal surfaces,
materials science and
water waves.\par
\indent In this paper, problem \eqref{Nostro} is studied exploiting classical variational methods. More precisely we apply the ${\mathbb{Z}}_2$-symmetric version of the Mountain Pass Theorem (briefly ${\mathbb{Z}}_2$-MPT) to this kind of equations motivated by the current literature where the MPT has been intensively applied to find solutions to
quasilinear elliptic equations; see \cite{ar,puccirad, rabinowitz,struwe}.\par
\indent Technically, this approach is realizable checking that the associated energy functional verifies the usual compactness Palais-Smale condition in a suitable variational setting developed in \cite{svmountain}. Indeed, the nonlocal analysis that we perform here
in order to use Mountain Pass Theorem is quite general and successfully exploited for other goals
in several recent contributions; see \cite{svmountain,svlinking,servadeivaldinociBN} and \cite{valpal}
for an elementary introduction to this topic and for a list of related references.\par
This functional analytical context is inspired by (but not equivalent
to) the fractional Sobolev spaces, in order to correctly encode
the Dirichlet boundary datum in the variational formulation.\par
Further, we suppose that the right-hand side of
equation~\eqref{Nostro} is a continuous odd function
$f:\bar\Omega\times \erre^n\to \erre$ verifying the following conditions:

\begin{itemize}
\item[$(\rm h_1)$] \textit{There exist $a_1, a_2>0$ and $q\in (2, 2^*)$, $2^*:=\displaystyle\frac{2n}{n-2s}$ such that}
$$
|f(x,t)|\le a_1+a_2|t|^{q-1},
$$
\textit{for every $x\in \bar\Omega$, $t\in \erre$};
\item[$(\rm h_2)$] \textit{There are two constants $\theta>2$ and $r>0$ such that}
$$
0<\theta F(x,t)\le tf(x,t),
$$
\textit{for every $x\in \bar\Omega$, and $|t|\geq r$},
\end{itemize}
where the function $F$ is
the primitive of $f$ with respect to the second variable, that is
\begin{equation}\label{F}
{\displaystyle F(x,t):=\int_0^t f(x,s)ds},\quad\, (\forall\, t\in\erre).
\end{equation}
\indent Under the previous assumptions, we prove the existence
of infinitely many weak solutions to problem \eqref{Nostro}; see Theorem \ref{Esistenza}. Note that the symmetry hypothesis on $f$ allows to remove any condition at zero.\par
In the nonlocal framework, the simplest example we can deal with is
given by the fractional Laplacian, according to the following
proposition.

\begin{theorem}\label{lapfra0}
Let $s\in (0,1)$, $n>2s$ and $\Omega$ be an open bounded set of $\erre^n$
with
Lipschitz boundary.\par
Consider the following equation
\begin{equation}\label{problemalapfrac0}
{\displaystyle \int_{\erre^{n}\times\erre^n}
\frac{(u(x)-u(y))(\varphi(x)-\varphi(y))}{|x-y|^{n+2s}}dxdy\!=\!\!\int_\Omega
f(x,u(x))\,\varphi(x)dx}
\end{equation}
for any $\varphi\in H^s(\erre^n)$ with $\varphi=0$ a.e. in $\erre^n\setminus\Omega$.\par
 If $f:\bar\Omega\times \erre \to \erre$ is an odd continuous function verifying $(\rm h_1)$ and $(\rm h_2)$, then problem~\eqref{problemalapfrac0} admits a sequence of infinitely many solutions $\{u_j\}\subset H^s(\erre^n)$,
such that $u_j=0$ a.e. in $\erre^n\setminus\Omega$.
\end{theorem}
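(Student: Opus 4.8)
The plan is to realize the solutions as critical points of an even energy functional and to apply the $\mathbb{Z}_2$-symmetric Mountain Pass Theorem. First I would fix the natural variational space, namely
\[
X_0:=\{u\in H^s(\erre^n): u=0 \text{ a.e. in } \erre^n\setminus\Omega\},
\]
endowed with the Gagliardo norm $\|u\|^2:=\int_{\erre^{2n}}\frac{|u(x)-u(y)|^2}{|x-y|^{n+2s}}\,dx\,dy$, which for the fractional Laplacian kernel is equivalent to the restriction of the $H^s$-norm. On this Hilbert space I would introduce the functional
\[
\mathcal J(u):=\frac12\|u\|^2-\int_\Omega F(x,u(x))\,dx,
\]
and check, using the subcritical growth $(\mathrm h_1)$ together with the continuous embedding $X_0\hookrightarrow L^q(\Omega)$ (valid since $q<2^*$), that $\mathcal J\in C^1(X_0,\erre)$ with Fr\'echet derivative
\[
\langle \mathcal J'(u),\varphi\rangle=\int_{\erre^{2n}}\frac{(u(x)-u(y))(\varphi(x)-\varphi(y))}{|x-y|^{n+2s}}\,dx\,dy-\int_\Omega f(x,u(x))\varphi(x)\,dx.
\]
Thus the critical points of $\mathcal J$ are precisely the weak solutions of \eqref{problemalapfrac0}. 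Since $f$ is odd, its primitive $F$ is even in the second variable, so $\mathcal J$ is an even functional with $\mathcal J(0)=0$.

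Next I would verify the Palais--Smale condition. Given a sequence $\{u_j\}$ with $\mathcal J(u_j)$ bounded and $\mathcal J'(u_j)\to 0$, the Ambrosetti--Rabinowitz condition $(\mathrm h_2)$ yields, via the standard estimate $\mathcal J(u_j)-\frac1\theta\langle\mathcal J'(u_j),u_j\rangle\geq(\frac12-\frac1\theta)\|u_j\|^2-C$ with $\theta>2$, that $\{u_j\}$ is bounded in $X_0$. By reflexivity a subsequence converges weakly to some $u$, and the compact embedding $X_0\hookrightarrow\hookrightarrow L^q(\Omega)$ promotes this to strong convergence in $L^q(\Omega)$; using $(\mathrm h_1)$ the nonlinear term $\int_\Omega f(x,u_j)(u_j-u)\,dx\to 0$, and combining with $\langle\mathcal J'(u_j),u_j-u\rangle\to 0$ gives $\|u_j-u\|\to 0$, i.e. strong convergence in $X_0$.

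It then remains to check the geometric hypotheses. For the local condition I would use $(\mathrm h_1)$ and the Sobolev inequality to obtain, for $\|u\|$ small, the estimate $\mathcal J(u)\geq \frac12\|u\|^2 - c_1\|u\| - c_2\|u\|^q$, which for $q>2$ yields $\rho,\alpha>0$ with $\mathcal J(u)\geq\alpha$ on the sphere $\|u\|=\rho$. For the second condition, integrating $(\mathrm h_2)$ gives the superquadratic lower bound $F(x,t)\geq c_3|t|^\theta - c_4$, so on any finite-dimensional subspace $W\subset X_0$, where all norms are equivalent, $\mathcal J(u)\to-\infty$ as $\|u\|\to\infty$; hence $\{u\in W:\mathcal J(u)\geq 0\}$ is bounded. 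With $\mathcal J$ even, of class $C^1$, satisfying Palais--Smale, $\mathcal J(0)=0$, and these two geometric properties, the $\mathbb{Z}_2$-MPT furnishes an unbounded sequence of critical values, producing infinitely many distinct critical points $\{u_j\}\subset X_0$, which are the desired weak solutions.

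The step I expect to be the main obstacle is the careful verification of the Palais--Smale condition in the nonlocal setting---specifically, establishing the compact embedding $X_0\hookrightarrow\hookrightarrow L^q(\Omega)$ for the fractional space and upgrading weak to strong convergence in the Gagliardo norm, since the bilinear form is nonlocal and one cannot localize estimates as in the classical Laplacian case. This is precisely the technical analysis of the fractional spaces alluded to in the abstract and developed in the cited variational framework.
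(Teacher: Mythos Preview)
Your overall strategy---variational formulation on $X_0$, evenness, Palais--Smale via $(\mathrm h_2)$ and the compact embedding, and the $\mathbb Z_2$-symmetric Mountain Pass Theorem---matches the paper's, and your treatment of Palais--Smale and of condition $(I_2)$ is essentially the same as in the paper. The compact embedding you flag as the ``main obstacle'' is in fact taken directly from the cited literature and is not where the difficulty lies here.

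The genuine gap is in your verification of the local geometry $(I_1)$. From $(\mathrm h_1)$ alone you only get $|F(x,t)|\le a_1|t|+\tfrac{a_2}{q}|t|^q$ (or $|F|\le C_1(1+|t|^q)$), and hence an estimate of the form
\[
\mathcal J(u)\ge \tfrac12\|u\|^2-c_1\|u\|-c_2\|u\|^q\quad\text{or}\quad \mathcal J(u)\ge \tfrac12\|u\|^2-c_2\|u\|^q-C_1|\Omega|.
\]
Neither of these forces $\mathcal J\ge\alpha>0$ on \emph{some} sphere: in the first, the linear term makes the right-hand side negative for small $\rho$, and in the second, whether the maximum of $\tfrac12\rho^2-c_2\rho^q$ exceeds $C_1|\Omega|$ depends on the constants $a_1,a_2,|\Omega|$, which you have no control over. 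Your assertion ``which for $q>2$ yields $\rho,\alpha>0$'' is therefore unjustified. (This is exactly why the paper remarks that the symmetry hypothesis ``allows to remove any condition at zero'': without such a condition, the origin need not be a local minimum.)

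The paper repairs this by genuinely using the splitting $X_0=E_1\oplus E_2$ in Theorem~\ref{ZMPT}. It takes $E_1=\mathrm{span}\{e_1,\dots,e_{k_0-1}\}$ and $E_2$ its orthogonal complement, and on $E_2$ combines the interpolation inequality $\|u\|_{L^q}^q\le \|u\|_{L^2}^\beta\|u\|_{L^{2^*}}^{q-\beta}$ with the spectral bound $\|u\|_{L^2}\le \lambda_{k_0}^{-1/2}\|u\|_{X_0}$ to obtain
\[
J_K(u)\ge \Big(\tfrac12-\frac{C}{\lambda_{k_0}^{\beta/2}}\|u\|_{X_0}^{q-2}\Big)\|u\|_{X_0}^2-C_1|\Omega|.
\]
Fixing $\rho$ first (large enough to beat $C_1|\Omega|$) and then choosing $k_0$ large so that $\lambda_{k_0}$ absorbs the $\|u\|^q$ term gives $J_K\ge 1$ on $\partial B_\rho\cap E_2$. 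This eigenfunction decomposition and the interpolation/spectral argument are the missing ingredients in your proposal.
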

 The above result is the fractional analogous of \cite[Theorem 9.38]{rabinowitz} in which the classical Dirichlet problem
\begin{equation} \tag{$D_{f}$} \label{SV}
\left\{
\begin{array}{ll}
-\Delta u
= f(x,u) & \rm in \quad \Omega
\\u=0 & \rm on\,\,\partial\Omega,\\
\end{array}
\right.
\end{equation}
is studied; see \cite[Theorem 6.6]{struwe}. We also cite \cite{ar,puccirad} where MPT (and some of its variant) has been intensively applied to find solutions to quasilinear elliptic equations.\par
\indent The plan of the paper is as follows; Section 2 is devoted to our abstract framework and preliminaries. Successively, in Sections 3 we give the main result; see Theorem \ref{Esistenza}. Finally, a concrete example of application is presented in Example \ref{esempio0}.
\section{Abstract Framework}\label{section2}
In this subsection we briefly recall the definition of the functional space~$X_0$, firstly introduced in \cite{sv}, and we give some notations.
The reader familiar with this topic may skip this section and go directly to the next one.

The functional space $X$ denotes the linear space of Lebesgue
measurable functions from $\RR^n$ to $\RR$ such that the restriction
to $\Omega$ of any function $g$ in $X$ belongs to $L^2(\Omega)$ and
$$
((x,y)\mapsto (g(x)-g(y))\sqrt{K(x-y)})\in
L^2\big((\RR^n\times\RR^n) \setminus ({\mathcal C}\Omega\times
{\mathcal C}\Omega), dxdy\big),$$ where ${\mathcal C}\Omega:=\RR^n
\setminus\Omega$.\par
 We denote by $X_0$ the following linear
subspace of $X$
$$X_0:=\big\{g\in X : g=0\,\, \mbox{a.e. in}\,\,
\RR^n\setminus
\Omega\big\}.$$
We remark that $X$ and $X_0$ are non-empty, since $C^2_0 (\Omega)\subseteq X_0$ by \cite[Lemma~11]{sv}.
Moreover, the space $X$ is endowed with the norm defined as
$$
\|g\|_X:=\|g\|_{L^2(\Omega)}+\Big(\int_Q |g(x)-g(y)|^2K(x-y)dxdy\Big)^{1/2}\,,
$$
where $Q:=(\RR^n\times\RR^n)\setminus \mathcal O$ and
${\mathcal{O}}:=({\mathcal{C}}\Omega)\times({\mathcal{C}}\Omega)\subset\RR^n\times\RR^n$.\par
It is easily seen that $\|\cdot\|_X$ is a norm on $X$; see, for
instance, \cite{svmountain} for a proof.\par

By \cite[Lemmas~6 and 7]{svmountain} in the sequel we can take the function
\begin{equation}\label{normaX0}
X_0\ni v\mapsto \|v\|_{X_0}:=\left(\int_Q|v(x)-v(y)|^2K(x-y)dxdy\right)^{1/2}
\end{equation}
as norm on $X_0$.\par
 Also $\left(X_0, \|\cdot\|_{X_0}\right)$ is a Hilbert space with scalar product
$$
\langle u,v\rangle_{X_0}:=\int_Q
\big( u(x)-u(y)\big) \big( v(x)-v(y)\big)\,K(x-y)dxdy,
$$
see \cite[Lemmas~7]{svmountain}.\par
Note that in \eqref{normaX0} (and in the related scalar product) the integral can be extended to all $\RR^n\times\RR^n$, since $v\in X_0$ (and so $v=0$ a.e. in $\RR^n\setminus \Omega$).

In what follows, we denote by $\lambda_k$ be the $k$-th eigenvalue of
the operator~$-\mathcal L_K$ with homogeneous Dirichlet boundary
data, namely the $k$-th eigenvalue of the problem
\begin{equation}\label{problema autovalori}
\left\{\begin{array}{ll}
-\mathcal L_K u=\lambda u & \mbox{in } \Omega\\
u=0 & \mbox{in } \mathbb{R}^{n}\setminus\Omega.
\end{array}
\right.
\end{equation}

\indent The set of the eigenvalues of problem \eqref{problema autovalori} consists of a sequence $\left\{\lambda_k\right\}_{k\in\mathbb{N}}$ with\footnote{As usual, here we call $\lambda_1$ the \emph{first
eigenvalue}
of the operator $-\mathcal L_K$. This notation is
justified by~\eqref{ordine lambdak}. Notice also that some of the eigenvalues
in the sequence $\big\{ \lambda_k\big\}_{{k\in\enne}}$
may repeat, i.e. the inequalities in~\eqref{ordine lambdak} may
be not always strict.}
\begin{equation}
0<\lambda_1<\lambda_2\leq\ldots\leq\lambda_k\leq\lambda_{k+1}\leq\ldots\label{ordine lambdak}
\end{equation}
and
\begin{equation*}
\lambda_k\rightarrow +\infty\,\,\mbox{as }\,\,k\rightarrow \infty.
\end{equation*}

\indent Further, the following characterization holds:
$$
\lambda_{k}=\min_{u\in \mathbb P_{k}\setminus{\left\{0\right\}}}\frac{\displaystyle \int_{\mathbb{R}^{2n}}\left|u(x)-u(y)\right|^{2}K(x-y)dx\,dy}
{\displaystyle\int_{\Omega}u(x)^{2}dx},\label{lambda_k}
$$
where
\begin{equation}\label{pk+1}
 \mathbb P_{k}:=\left\{u\in X_0:\,\left\langle u,e_j\right\rangle_{X_0}=0\quad\forall j=1,\ldots,k\right\}.
\end{equation}

Finally, the symbol $E(\lambda_k)$ denotes the linear space of eigenfunctions corresponding to $\lambda_k$. For the existence and the basic properties of this eigenvalue we
refer to \cite[Proposition~9 and Appendix~A]{svlinking}, where a
spectral theory for these general integrodifferential nonlocal operators
was developed. Further properties can be also found in \cite{sY}.\par

While for a general kernel~$K$ satisfying
conditions from $(\rm k_1)$ to $(\rm k_3)$ we have that
$X_0\subset H^s(\RR^n)$, in the model case $K(x):=|x|^{-(n+2s)}$ the
space $X_0$ consists of all the functions of the usual fractional
Sobolev space $H^s(\RR^n)$ which vanish a.e. outside $\Omega$; see
\cite[Lemma~7]{servadeivaldinociBN}.\par
 Here $H^s(\RR^n)$ denotes
the usual fractional Sobolev space endowed with the norm (the
so-called \emph{Gagliardo norm})
$$
\|g\|_{H^s(\RR^n)}=\|g\|_{L^2(\RR^n)}+
\Big(\int_{\RR^n\times\RR^n}\frac{\,\,\,|g(x)-g(y)|^2}{|x-y|^{n+2s}}\,dxdy\Big)^{1/2}.
$$
Before concluding this subsection, we recall the embedding
properties of~$X_0$ into the usual Lebesgue spaces; see
\cite[Lemma~8]{svmountain}.\par
 The embedding $j:X_0\hookrightarrow
L^{\nu}(\RR^n)$ is continuous for any $\nu\in [1,2^*]$, while it is
compact whenever $\nu\in [1,2^*)$, where $2^*:=2n/(n-2s)$. Hence, for any $\nu\in [1,2^*]$
there exists a positive constant $c_\nu$ such that
\begin{equation}\label{Poincare1}
\|v\|_{L^{\nu}(\erre^n)}\leq c_{\nu} \|v\|_{X_0} \quad \mbox{for any}\,\,v\in
X_0.
\end{equation}

For further details on the fractional Sobolev spaces we refer
to~\cite{valpal} and to the references therein, while for other
details on $X$ and $X_0$ we refer to \cite{sv}, where these
functional spaces were introduced, and also to \cite{sY, svmountain,
svlinking, servadeivaldinociBN}, where various properties of these spaces were
proved.

Finally, for the sake of completeness, we recall that a $C^1$-functional $J:E\to\R$, where $E$ is a real Banach
space with topological dual $E^*$, satisfies the \textit{Palais-Smale condition at level} $\mu\in\R$, (briefly $\textrm{(PS)}_{\mu}$) when:
\begin{itemize}
\item[$\textrm{(PS)}_{\mu}$] {\it Every sequence $\{u_j\}_{j\in\mathbb{N}}$ in $E$ such that
$$
J(u_j)\to \mu, \quad{\rm and}\quad \|J'(u_j)\|_{E^*}\to0,
$$
as $j\rightarrow \infty$, possesses a convergent subsequence.}
\end{itemize}
We say that $J$ satisfies the \textit{Palais-Smale condition} (in short $\textrm{(PS)}$) if $\textrm{(PS)}_{\mu}$ holds for every $\mu\in \erre$.\par
With the above notation, our main tool is the following classical result recalled in a convenient form.

\begin{theorem}\label{ZMPT}
Let $E$ be an infinite dimensional real Banach space and let $J\in C^1(E;\erre)$ be even, satisfying the $(\rm PS)$ condition and $J(0_E)=0$. Suppose $E=E_1\oplus E_2$, where $E_1$ is finite dimensional and $J$ satisfies$:$
 \begin{itemize}
  \item[$(I_1)$] There exist constant $\rho,\alpha>0$ such that $$J(u)\geq \alpha,$$ for every $u\in E_2$ and $\|u\|_E=\rho$.
  \item[$(I_2)$] For each finite dimensional subspace $W\subset E$, the set
  $$
  \{u\in W:J(u)\geq 0\}
  $$
  is bounded in $E$.
 \end{itemize}
 Then $J$ has an unbounded sequence of critical values.
\end{theorem}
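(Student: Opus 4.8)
The plan is to prove this by the classical genus--based minimax machinery for even functionals, combining an equivariant deformation argument with a Borsuk--Ulam type intersection estimate, exactly as in the symmetric Mountain Pass Theorem. Throughout I write $\mathrm{gen}(A)$ for the Krasnoselskii genus of a closed symmetric set $A\subset E\setminus\{0_E\}$ (the least $k\in\N$ for which an odd continuous map $A\to\R^k\setminus\{0\}$ exists, $\mathrm{gen}(\emptyset)=0$, and $\mathrm{gen}(A)=+\infty$ if no such $k$ exists). I will freely use the standard properties of $\mathrm{gen}$: monotonicity under inclusion and under odd continuous maps, subadditivity $\mathrm{gen}(A\cup B)\le\mathrm{gen}(A)+\mathrm{gen}(B)$, the normalization $\mathrm{gen}(\partial B_r\cap V)=\dim V$ for a finite dimensional subspace $V$, and the fact that every compact symmetric set has finite genus and possesses a symmetric neighbourhood of the same genus. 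Evenness of $J$ and $J(0_E)=0$ are what make this index theory applicable.

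First I would fix an increasing chain of finite dimensional subspaces $E_1=V_1\subset V_2\subset\cdots$ with $\dim V_m=\dim E_1+(m-1)$ and $\overline{\bigcup_m V_m}$ dense in $E$. For each $m$, condition $(I_2)$ furnishes a radius $R_m>\rho$ with $J\le0$ on $V_m\setminus B_{R_m}$; I set $D_m:=\overline{B_{R_m}}\cap V_m$, and define
$$
\Gamma_m:=\{\,h\in C(D_m,E):\ h\ \text{odd},\ h|_{\partial D_m}=\mathrm{id}\,\},\qquad c_m:=\inf_{h\in\Gamma_m}\ \max_{u\in D_m}J(h(u)).
$$
Taking $h=\mathrm{id}$ shows each $c_m$ is finite, and a standard comparison of these families (most transparent in the equivalent genus formulation of the $c_m$) gives that $\{c_m\}$ is nondecreasing. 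The first substantive step is the lower bound $c_m\ge\alpha$: for every $h\in\Gamma_m$ a Borsuk--Ulam/degree argument forces the image $h(D_m)$ to meet the sphere $\partial B_\rho\cap E_2$, using that $h$ is odd, equals the identity on $\partial D_m$ (where $J\le0$), and that $\dim V_m$ exceeds the codimension $\dim E_1$ of $E_2$; evaluating $(I_1)$ at an intersection point yields $\max_{D_m}J\circ h\ge\alpha$, whence $c_m\ge\alpha>0$.

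Next I would show each $c_m$ is a critical value and record the multiplicity statement. If $c_m$ were regular, the $(\mathrm{PS})$ condition supplies a pseudo-gradient flow which, after symmetrization, produces an \emph{odd} deformation $\eta$ lowering $\{J\le c_m+\varepsilon\}$ into $\{J\le c_m-\varepsilon\}$ while fixing $\{J\le0\}\supset\partial D_m$; composing $\eta$ with a near-optimal $h\in\Gamma_m$ gives a competitor in $\Gamma_m$ with strictly smaller maximum, contradicting the definition of $c_m$. Thus every $c_m$ is critical. The same deformation technique, now excising a symmetric neighbourhood of the critical set, yields the key multiplicity estimate: if $c:=c_m=\cdots=c_{m+k}$, then the critical set $K_c:=\{u:J'(u)=0,\ J(u)=c\}$ satisfies $\mathrm{gen}(K_c)\ge k+1$, so coincidences of the $c_m$ force $K_c$ to have large (indeed infinite, in the limit) genus.

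The main obstacle is the divergence $c_m\to+\infty$, which is precisely what makes the sequence of critical values \emph{unbounded}. I would argue by contradiction: suppose $c_m\le\bar c$ for all $m$, so that $c_m\uparrow c^*\le\bar c$ by monotonicity. Then $K:=\{u:J'(u)=0,\ \alpha\le J(u)\le\bar c\}$ is compact by $(\mathrm{PS})$, symmetric, and omits $0_E$ (since $J(0_E)=0<\alpha$), hence $m_0:=\mathrm{gen}(K)<\infty$ and $K$ admits a symmetric neighbourhood $N$ with $\mathrm{gen}(\overline N)=m_0$. Choosing $\varepsilon>0$ small and $m$ large enough that $c_m>c^*-\varepsilon$, an odd deformation sending $\{J\le c^*+\varepsilon\}\setminus N$ into $\{J\le c^*-\varepsilon\}$ (and fixing $\{J\le0\}$) can be composed with a near-optimal element of $\Gamma_{m+m_0}$; the excision of the genus-$m_0$ set $N$ matches exactly the index shift from $m+m_0$ to $m$, yielding $c_m\le c^*-\varepsilon$. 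This contradicts $c_m>c^*-\varepsilon$. Hence $c_m\to+\infty$, and since each $c_m$ is a critical value, $J$ possesses an unbounded sequence of critical values, which completes the proof.
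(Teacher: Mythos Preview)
The paper does not prove this theorem at all: it states it and immediately refers the reader to \cite[Theorem~9.12]{rabinowitz}. Your sketch is precisely the classical genus/minimax argument that one finds in that reference (odd extensions of the identity on large balls in finite dimensional subspaces, an intersection lemma of Borsuk--Ulam type to get $c_m\ge\alpha$, equivariant deformation to show criticality and multiplicity, and the genus--excision contradiction to force $c_m\to+\infty$), so in spirit it agrees with the source the paper invokes. A couple of minor caveats worth tightening if you write it out in full: the monotonicity $c_m\le c_{m+1}$ is not immediate from the $\Gamma_m$ definition since $D_m\subset V_m$ and $D_{m+1}\subset V_{m+1}$ are different domains, and one usually argues it via the equivalent genus/pseudo-index formulation or via an explicit odd retraction; also the intersection step requires $\dim V_m>\dim E_1$, so the argument really starts from $m\ge2$. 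These are standard and do not affect the conclusion.
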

See \cite[Theorem 9.12]{rabinowitz}.\par
\smallskip
 \noindent We cite the monograph \cite{k2} as general reference on variational methods adopted in this paper.
\section{The main Theorem}
Our result is as follows.
\begin{theorem}\label{Esistenza}
Let $s\in (0,1)$, $n>2s$ and $\Omega$ be an open bounded set of
$\RR^n$ with Lipschitz boundary and
$K:\RR^n\setminus\{0\}\rightarrow(0,+\infty)$ be a map
satisfying $(\rm k_1)$--$(\rm k_3)$.\par
 In addition, let $f:\bar\Omega\times\erre\to\erre$ be an odd
continuous function verifying $(\rm h_1)$ and $(\rm h_2)$.\par
 Then, problem~\eqref{Nostro} possesses an unbounded sequence of weak solutions.
\end{theorem}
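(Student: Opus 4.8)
The plan is to verify the hypotheses of the $\mathbb{Z}_2$-symmetric Mountain Pass Theorem (Theorem \ref{ZMPT}) for the energy functional associated to problem \eqref{Nostro}. First I would introduce the Euler-Lagrange functional $J\colon X_0\to\erre$ defined by
\begin{equation*}
J(u):=\frac{1}{2}\|u\|_{X_0}^2-\int_\Omega F(x,u(x))\,dx,
\end{equation*}
and check that $J\in C^1(X_0;\erre)$ with
\begin{equation*}
\langle J'(u),\varphi\rangle=\langle u,\varphi\rangle_{X_0}-\int_\Omega f(x,u(x))\varphi(x)\,dx,
\end{equation*}
so that critical points of $J$ coincide with weak solutions of \eqref{Nostro}. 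The growth condition $(\rm h_1)$ together with the continuous embedding \eqref{Poincare1} into $L^q(\erre^n)$ for $q\in(2,2^*)$ guarantees that $J$ and $J'$ are well defined and of class $C^1$; the subcritical exponent is what makes the nonlinear term differentiable. Since $f$ is odd in its second variable, $F$ is even, hence $J$ is an even functional with $J(0)=0$, as required.

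Next I would verify the Palais-Smale condition, which I expect to be the main technical obstacle. Given a $(\rm PS)_\mu$ sequence $\{u_j\}$, the standard argument uses the Ambrosetti-Rabinowitz condition $(\rm h_2)$ to obtain boundedness: combining $J(u_j)\to\mu$ and $\langle J'(u_j),u_j\rangle=o(\|u_j\|_{X_0})$ with the inequality $\theta F(x,t)\le tf(x,t)$ valid for $|t|\ge r$ (and controlling the region $|t|<r$ via $(\rm h_1)$) yields, for $\theta>2$,
\begin{equation*}
\mu+o(1)+o(1)\|u_j\|_{X_0}\geq\Big(\frac{1}{2}-\frac{1}{\theta}\Big)\|u_j\|_{X_0}^2-C,
\end{equation*}
which forces $\{u_j\}$ to be bounded in the Hilbert space $X_0$. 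Boundedness gives a weakly convergent subsequence $u_j\rightharpoonup u$; the \emph{compact} embedding $X_0\hookrightarrow L^\nu(\erre^n)$ for $\nu\in[1,2^*)$ then upgrades this to strong convergence in $L^q$, and a routine computation with $\langle J'(u_j)-J'(u),u_j-u\rangle\to 0$ shows $\|u_j-u\|_{X_0}\to 0$, establishing $(\rm PS)$.

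Finally I would check the two geometric conditions. Using the spectral decomposition of $-\mathcal{L}_K$, I would split $X_0=E_1\oplus E_2$ with $E_1$ spanned by finitely many eigenfunctions and $E_2$ the orthogonal complement on which $\|u\|_{L^2(\Omega)}^2\le\lambda_{k+1}^{-1}\|u\|_{X_0}^2$ holds. On $E_2$, condition $(\rm h_1)$ and the embeddings give $J(u)\ge\frac{1}{2}\|u\|_{X_0}^2-c\|u\|_{X_0}^q-c'\|u\|_{X_0}$ near the origin; since $q>2$ the quadratic term dominates for small $\|u\|_{X_0}=\rho$, yielding $(I_1)$ with suitable $\rho,\alpha>0$. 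For $(I_2)$, on any finite-dimensional subspace $W$ all norms are equivalent, and integrating $(\rm h_2)$ gives the superquadratic lower bound $F(x,t)\ge c_1|t|^\theta-c_2$ with $\theta>2$, so $J(u)\le\frac{1}{2}\|u\|_{X_0}^2-c_1\|u\|_{L^\theta(\Omega)}^\theta+c_2|\Omega|\to-\infty$ as $\|u\|_{X_0}\to\infty$ in $W$; hence $\{u\in W:J(u)\ge 0\}$ is bounded. With all hypotheses of Theorem \ref{ZMPT} satisfied, $J$ possesses an unbounded sequence of critical values, and the corresponding critical points form the desired unbounded sequence of weak solutions.
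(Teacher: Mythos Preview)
Your overall strategy mirrors the paper's: verify the hypotheses of Theorem~\ref{ZMPT} for the functional $J_K$, with the Palais--Smale condition handled via $(\rm h_2)$ and the compact embedding, and $(I_2)$ via norm equivalence on finite-dimensional subspaces combined with the superquadratic lower bound $F(x,t)\geq \gamma(x)|t|^\theta - c$. Those parts are fine and essentially match the paper.

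The gap is in your verification of $(I_1)$. From $(\rm h_1)$ alone one only gets $|F(x,t)|\leq a_1|t|+\tfrac{a_2}{q}|t|^q$, so the lower bound you write,
\[
J(u)\geq \tfrac12\|u\|_{X_0}^2-c\|u\|_{X_0}^q-c'\|u\|_{X_0},
\]
contains a \emph{linear} term that is \emph{not} dominated by the quadratic one when $\rho=\|u\|_{X_0}$ is small; the observation ``$q>2$'' only dispatches the $\rho^q$ term. Since no behaviour of $f$ at zero is assumed beyond oddness (so $F(x,t)\sim a_1|t|$ near $0$ is allowed), the small-ball mountain-pass geometry does not follow as you claim. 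You mention the spectral inequality $\|u\|_{L^2}^2\le \lambda_{k+1}^{-1}\|u\|_{X_0}^2$ on $E_2$ but never actually use it in the estimate.

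The paper's remedy is exactly this missing step: on $E_2=\overline{\bigoplus_{k\ge k_0}E(\lambda_k)}$ one interpolates $\|u\|_{L^q}^q\le \|u\|_{L^2}^{\beta}\|u\|_{L^{2^*}}^{q-\beta}$ with $\beta=2(2^*-q)/(2^*-2)>0$, and then the eigenvalue bound on $E_2$ yields
\[
J_K(u)\geq\tfrac12\|u\|_{X_0}^2-\frac{C}{\lambda_{k_0}^{\beta/2}}\|u\|_{X_0}^q-C_1\meas(\Omega).
\]
Here $\rho$ is fixed \emph{first} (large enough to beat the constant term), and $k_0$ is chosen \emph{afterwards} so large that the middle term is absorbed. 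The freedom in $k_0$, not smallness of $\rho$, is what makes $(I_1)$ work. A minor additional point: Theorem~\ref{ZMPT} yields an unbounded sequence of critical \emph{values}; the paper then uses $\langle J'_K(u_k),u_k\rangle=0$ together with $(\rm h_2)$ to deduce that the solutions $\{u_k\}$ themselves are unbounded in $X_0$, which you assert but do not justify.
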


We recall that a \textit{weak solution} of problem \eqref{Nostro} is a function $u\in X_0$ such that
$$
\begin{array}{l} {\displaystyle \int_Q \big(u(x)-u(y)\big)\big(\varphi(x)-\varphi(y)\big)K(x-y)dxdy}\\
\displaystyle \qquad\qquad\qquad\qquad\qquad\quad\quad= {\int_\Omega f(x,u(x))\varphi(x)dx},\,\,\,\,\,\,\, \forall\,\, \varphi \in X_0.
\end{array}
$$

 \indent For the proof of Theorem~\ref{Esistenza}, we observe that
problem~\eqref{Nostro} has a variational structure, indeed it is the Euler-Lagrange equation of the functional $J_K:X_0\to \RR$ defined as follows
\begin{equation}\label{functional}
J_K(u):=\frac 1 2 \int_Q|u(x)-u(y)|^2 K(x-y)dxdy-\int_\Omega F(x, u(x))dx.
\end{equation}
Note that the functional $J_K$ is Fr\'echet differentiable in $u\in X_0$ and for any $\varphi\in X_0$ one has
$$\begin{aligned}
\langle J'_K(u), \varphi\rangle & = \int_Q \big(u(x)-u(y)\big)\big(\varphi(x)-\varphi(y)\big)K(x-y)dxdy\\
& \qquad \qquad \qquad \qquad \qquad \qquad\qquad -\int_\Omega f(x, u(x))\varphi(x)dx.
\end{aligned}$$

\indent Thus, critical points of $J_K$ are solutions to problem~\eqref{Nostro}.
In order to find these critical points, we will make use of Theorem \ref{ZMPT}.
For this, we have to check that the functional $J_K$ has a particular geometric
structure and satisfies the Palais--Smale compactness condition.

\subsection{Proof of Theorem \ref{Esistenza}}
 In order to prove our result, we apply (as claimed before) Theorem \ref{ZMPT} to the functional $J_K$ defined in \eqref{functional}. The conclusion of Theorem \ref{Esistenza} is equivalent to the assertion that $J_K$ admits an unbounded sequence of critical points.
 Hence, let us start observing that, since $f$ is odd in the second variable i.e. $f(x,-t)=-f(x,t)$, for every $t\in\erre$, $J_K$ is even.  Moreover, by definition, $J_K(0_{X_0})=0$.\par
  In the next two lemmas
 we shall verify the compactness $(\rm PS)$ condition.
\begin{lemma}\label{lemma1}
 Every Palais-Smale sequence for the functional $J_K$
 is bounded in $X_0$.
\end{lemma}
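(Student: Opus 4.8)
The plan is to exploit the Ambrosetti--Rabinowitz superquadraticity hypothesis $(\rm h_2)$ in the classical fashion. Let $\{u_j\}$ be a Palais--Smale sequence, so that $J_K(u_j)$ is bounded, say $|J_K(u_j)|\le C_0$, and $\|J_K'(u_j)\|_{X_0^*}\to 0$. The first step is to form the combination $J_K(u_j)-\frac{1}{\theta}\langle J_K'(u_j),u_j\rangle$ and compute it explicitly from \eqref{functional} and the formula for $J_K'$. The two quadratic contributions coming from $\|\cdot\|_{X_0}^2$ collapse into $\left(\frac12-\frac1\theta\right)\|u_j\|_{X_0}^2$, whose coefficient is strictly positive precisely because $\theta>2$, while the remaining terms assemble into $-\int_\Omega\left(F(x,u_j)-\frac{1}{\theta}u_jf(x,u_j)\right)dx$.

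Second, I would bound this integral from below by splitting $\Omega$ into the set where $|u_j(x)|\ge r$ and its complement. On the former, condition $(\rm h_2)$ gives $F(x,t)-\frac{1}{\theta}tf(x,t)\le 0$, so that region contributes a nonnegative amount to $-\int_\Omega(\cdots)\,dx$. On the set where $|u_j(x)|<r$, the continuity of $f$ (hence of $F$) on the compact set $\bar\Omega\times[-r,r]$ bounds the integrand uniformly, so the contribution there is at least $-C_1$ for a constant $C_1$ independent of $j$. This yields
$$J_K(u_j)-\frac{1}{\theta}\langle J_K'(u_j),u_j\rangle\ge\left(\frac12-\frac1\theta\right)\|u_j\|_{X_0}^2-C_1.$$

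Third, I would estimate the same quantity from above: $|J_K(u_j)|\le C_0$, and by the definition of the dual norm $|\langle J_K'(u_j),u_j\rangle|\le\|J_K'(u_j)\|_{X_0^*}\|u_j\|_{X_0}=\varepsilon_j\|u_j\|_{X_0}$ with $\varepsilon_j\to 0$. Combining the two bounds gives
$$\left(\frac12-\frac1\theta\right)\|u_j\|_{X_0}^2\le C_0+C_1+\frac{\varepsilon_j}{\theta}\|u_j\|_{X_0}.$$
Since the leading coefficient is positive while the right-hand side grows at most linearly in $\|u_j\|_{X_0}$ with a coefficient tending to zero, this forces $\{\|u_j\|_{X_0}\}$ to be bounded, which is the assertion.

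As for the difficulties, the argument is essentially routine once the Hilbert-space framework of Section \ref{section2} is in place; the only point demanding care is the region $\{|u_j|<r\}$, where $(\rm h_2)$ does not apply and one must instead invoke the compactness of $\bar\Omega\times[-r,r]$ together with the continuity of $f$ to absorb the contribution into a $j$-independent constant. It is worth noting that the nonlocal character of $\mathcal L_K$ plays no role in this lemma: the estimate relies only on the scalar-product structure of $X_0$ recorded in \eqref{normaX0} and on the growth and superquadraticity properties of $f$, exactly as in the local second-order case.
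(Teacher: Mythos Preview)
Your argument is correct and follows essentially the same route as the paper: both form the combination $J_K(u_j)-\tfrac{1}{\theta}\langle J_K'(u_j),u_j\rangle$, apply $(\rm h_2)$ on $\{|u_j|\ge r\}$ and the continuity of $f$ on $\bar\Omega\times[-r,r]$ on the complement, and then compare the resulting quadratic lower bound with the linear upper bound coming from the Palais--Smale data. The only cosmetic difference is that the paper phrases the final step as a contradiction (dividing by $\|u_j\|_{X_0}$ after assuming it diverges), while you argue directly.
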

\begin{proof}
Let $\{u_j\}_{j\in\mathbb{N}}\subset X_0$ be a Palais-Smale sequence i.e.
\begin{equation}\label{E1}
J_K(u_j)\rightarrow \mu,
\end{equation}
for $\mu\in \erre$ and
\begin{equation}\label{E2}
\quad \|J'_K(u_j)\|_{X_0^*}=\sup\Big\{ \big|\langle\,J'_K(u_j),\varphi\,\rangle \big|\,: \;
\varphi\in
X_0\,, \|\varphi\|_{X_0}=1\Big\}\to0,
\end{equation}
as $j\rightarrow \infty$.\par
\indent We argue by contradiction. So, suppose that the conclusion is not true. Passing to a subsequence if necessary, we may assume that
$$
\|u_j\|_{X_0}\rightarrow +\infty,
$$
as $j\rightarrow \infty$.\par
\indent  By definition it follows that
  \begin{eqnarray*}
J_K(u_j) &-& \frac{\langle J'_K(u_j),u_j\rangle }{\theta}\\\nonumber
               &=& \left(\frac{1}{2}-\frac{1}{\theta}\right)\int_Q|u_j(x)-u_j(y)|^2K(x-y)dxdy\\\nonumber
               &+& \int_{\Omega}\left[\frac{f(x,u_j(x))u_j(x)}{\theta}-F(x,u_j(x))\right]dx,\nonumber
\end{eqnarray*}
for every $j\in \enne$.\par
 Thus
\begin{eqnarray*}
\left(\frac{\theta-2}{2\theta}\right)\|u_j\|_{X_0}^2&\leq& J_K(u_j)-\frac{\langle J'_K(u_j),u_j\rangle }{\theta}\\\nonumber
               &-& \int_{|u_j(x)|>r}\left[\frac{f(x,u_j(x))u_j(x)}{\theta}-F(x,u_j(x))\right]dx,\\\nonumber
               &+& M \meas(\Omega),\quad\, \forall\, j\in \enne,
\end{eqnarray*}
where $``\meas(\Omega)"$ denotes the standard Lebesgue measure of $\Omega$ and
$$
M:=\max\left\{\left|\frac{f(x,t)t}{\theta}-F(x,t)\right|:x\in\bar\Omega, |t|\leq r\right\}.
$$

\indent Now, we observe that, the Ambrosetti Rabinowitz condition yields
$$
\int_{|u_j(x)|>r}\left[\frac{f(x,u_j(x))u_j(x)}{\theta}-F(x,u_j(x))\right]dx\geq 0.
$$
So, we deduce that
\begin{eqnarray*}
\left(\frac{\theta-2}{2\theta}\right)\|u_j\|_{X_0}^2\leq J_K(u_j)-\frac{\langle J'_K(u_j),u_j\rangle }{\theta}+M \meas(\Omega),
\end{eqnarray*}
for every $j\in \enne$.\par
Then, for every $j\in \enne$ one has
\begin{eqnarray*}
C\|u_j\|_{X_0}^2\leq J_K(u_j)+{\theta}{\|J'_K(u_j)\|_{X_0^*}\|u_j\|_{X_0}}+M \meas(\Omega),
\end{eqnarray*}
where $C:=\displaystyle m_0\left(\frac{\theta-2}{2\theta}\right)>0$.\par
In conclusion, dividing by $\|u_j\|_{X_0}$ and letting $j\rightarrow \infty$, we obtain a contradiction.
\end{proof}

The above Lemma implies that the $C^1$-functional
$J_K$ verifies the Palais-Smale condition as proved in the next result.
\begin{lemma}\label{lemma2}
The functional $J_K$
satisfies the compactness $(\rm PS)$ condition.
\end{lemma}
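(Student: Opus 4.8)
The plan is to show that any Palais--Smale sequence admits a strongly convergent subsequence in $X_0$. Let $\{u_j\}_{j\in\enne}\subset X_0$ satisfy \eqref{E1} and \eqref{E2}. By Lemma \ref{lemma1} this sequence is bounded in $X_0$, and since $\left(X_0,\|\cdot\|_{X_0}\right)$ is a Hilbert space, hence reflexive, up to a subsequence (still denoted $\{u_j\}$) there exists $u\in X_0$ with $u_j\rightharpoonup u$ weakly in $X_0$. Invoking the compactness of the embedding $j:X_0\hookrightarrow L^{\nu}(\erre^n)$ for every $\nu\in[1,2^*)$ recalled before \eqref{Poincare1}, and noting that the exponent $q$ in $(\rm h_1)$ satisfies $q<2^*$, I obtain $u_j\to u$ strongly in $L^2(\Omega)$ and in $L^q(\Omega)$.

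First I would test the derivative against the difference $u_j-u$. Writing out the expression for $\langle J'_K(u_j),\varphi\rangle$ with $\varphi:=u_j-u$ gives
\begin{equation}\label{PStest}
\langle J'_K(u_j),u_j-u\rangle=\langle u_j,u_j-u\rangle_{X_0}-\int_\Omega f(x,u_j(x))\big(u_j(x)-u(x)\big)\,dx.
\end{equation}
Since $\{u_j-u\}$ is bounded in $X_0$ and $\|J'_K(u_j)\|_{X_0^*}\to0$ by \eqref{E2}, the left-hand side of \eqref{PStest} tends to zero.

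The crucial point is to prove that the nonlinear integral in \eqref{PStest} also vanishes in the limit; this is where the subcritical growth is decisive. By $(\rm h_1)$ and H\"older's inequality,
$$
\left|\int_\Omega f(x,u_j)(u_j-u)\,dx\right|\le a_1\meas(\Omega)^{1/2}\|u_j-u\|_{L^2(\Omega)}+a_2\|u_j\|_{L^q(\Omega)}^{q-1}\|u_j-u\|_{L^q(\Omega)},
$$
where I have applied the conjugate exponents $q/(q-1)$ and $q$ to the second term. Because $\|u_j\|_{L^q(\Omega)}$ stays bounded (by \eqref{Poincare1}), while $\|u_j-u\|_{L^2(\Omega)}\to0$ and $\|u_j-u\|_{L^q(\Omega)}\to0$ by the strong $L^{\nu}$-convergence, the right-hand side tends to zero. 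Hence $\langle u_j,u_j-u\rangle_{X_0}\to0$.

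Finally, the weak convergence $u_j\rightharpoonup u$ in $X_0$ yields $\langle u,u_j-u\rangle_{X_0}\to0$, so that
$$
\|u_j-u\|_{X_0}^2=\langle u_j,u_j-u\rangle_{X_0}-\langle u,u_j-u\rangle_{X_0}\to0,
$$
which is exactly the strong convergence $u_j\to u$ in $X_0$, establishing the $(\rm PS)$ condition. I expect the main obstacle to be the treatment of the nonlinear term: one must exploit the compactness of the embedding into $L^q(\Omega)$, which is guaranteed only by the strict inequality $q<2^*$, so that the argument would fail precisely at the critical exponent.
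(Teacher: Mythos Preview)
Your proof is correct and follows essentially the same route as the paper: boundedness from Lemma~\ref{lemma1}, weak compactness in the Hilbert space $X_0$, testing $J'_K(u_j)$ against $u_j-u$, and killing the nonlinear term via the subcritical growth $(\rm h_1)$ together with the compact embedding $X_0\hookrightarrow L^q(\Omega)$. The only cosmetic difference is in the endgame: the paper deduces $\|u_j\|_{X_0}\to\|u\|_{X_0}$ and then invokes \cite[Proposition~III.30]{brezis} (weak convergence plus norm convergence implies strong convergence in a uniformly convex space), whereas you obtain $\|u_j-u\|_{X_0}^2\to0$ directly from the identity $\|u_j-u\|_{X_0}^2=\langle u_j,u_j-u\rangle_{X_0}-\langle u,u_j-u\rangle_{X_0}$, which is slightly more self-contained.
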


\begin{proof}

Let $\{u_{j}\}_{j\in\mathbb{N}}\subset X_0$ be a Palais-Smale
sequence and,
in order to simplifies the notations, let us put
$$
\Phi(u):=\frac 1 2 \int_Q|u(x)-u(y)|^2 K(x-y)dxdy,
$$
for every $u\in X_0$.\par
Taking into account Lemma \ref{lemma1}, the sequence $\{u_j\}_{j\in\mathbb{N}}$ is necessarily bounded in $X_0$. Since $X_0$ is reflexive, we may extract a subsequence
that for simplicity we call again $\{u_{j}\}_{j\in\mathbb{N}}$, such that
$u_{j}\rightharpoonup u_\infty$ in $X_0$. This means that
\begin{equation}\label{convergenze0}
\begin{aligned}
 & \int_Q
\big( u_j(x)-u_j(y)\big) \big(\phi(x)-\phi(y)\big)\,K(x-y)dxdy \to \\
& \qquad \qquad \qquad
\int_Q
\big( u_\infty(x)-u_\infty(y)\big) \big(\phi(x)-\phi(y)\big)\,K(x-y)dxdy ,
\end{aligned}
\end{equation}
for any $\phi\in X_0$,
as $j\to \infty$.\par
 We will prove that ${u_{j}}$ strongly converges to $u_\infty\in X_0$.
Exploiting the derivative $J'_K(u_j)(u_j-u_\infty)$, we obtain
\begin{eqnarray}\label{jj}
\langle \Phi'(u_{j}),u_{j}-u_\infty\rangle &=& \langle J'_K(u_j),u_j-u_\infty\rangle\\
               &+ & \int_{\Omega}f(x,u_j(x))(u_j-u_\infty)(x)dx,\nonumber
\end{eqnarray}
where
\begin{eqnarray*}
\langle \Phi'(u_{j}),u_{j}-u_\infty\rangle  &=& \int_Q|u_j(x)-u_j(y)|^2K(x-y)dxdy\\\nonumber
               &-& \int_Q
\big( u_j(x)-u_j(y)\big) \big(u_\infty(x)-u_\infty(y)\big)\,K(x-y)dxdy\nonumber
\end{eqnarray*}
\indent Since $\|J'_K(u_j)\|_{X_0^{*}}\to0$ and the sequence $\{u_j-u_\infty\}$ is bounded in
$X_0$, taking into account that $|\langle
J'_K(u_j),u_j-u_\infty\rangle|\leq\|J'_K(u_j)\|_{X_0^{*}}\|u_j-u_\infty\|_{X_0}$, one has
\begin{eqnarray}\label{j2}
\langle J'_K(u_j),u_j-u_\infty\rangle\to0,
\end{eqnarray}
as $j\rightarrow \infty$.\par
 At this point, wee observe that, since the embedding
$X_0\hookrightarrow L^q(\Omega)$ is compact, clearly $u_j\to u_\infty$ strongly in $L^q(\Omega)$.
So by condition $(\textrm{h}_1)$, we easily obtain that
\begin{eqnarray}\label{j3}
\int_{\Omega}|f(x,u_j(x))||u_j(x)-u_\infty(x)|dx\to0,
\end{eqnarray}
as $j\rightarrow \infty$.\par
 \indent By \eqref{jj} relations \eqref{j2} and \eqref{j3} yield
\begin{eqnarray}\label{fin}
\langle \Phi'(u_{j}),u_{j}-u_\infty\rangle
\rightarrow 0,
\end{eqnarray}
as $j\rightarrow \infty$.\par
Hence by \eqref{fin} we can write
\begin{eqnarray}\label{fin2}
&& \int_Q|u_j(x)-u_j(y)|^2K(x-y)dxdy-\\\nonumber
               && \quad\quad\int_Q
\big( u_j(x)-u_j(y)\big) \big(u_\infty(x)-u_\infty(y)\big)\,K(x-y)dxdy\rightarrow 0,\nonumber
\end{eqnarray}
as $j\rightarrow \infty$.\par
Thus, by \eqref{fin2} and \eqref{convergenze0} it follows that
\begin{eqnarray*}
&& \lim_{j\rightarrow \infty}\int_Q|u_j(x)-u_j(y)|^2K(x-y)dxdy\\\nonumber
               && \quad\quad\quad\quad\quad\quad=\int_{Q}\left|u_\infty(x)-u_\infty(y)\right|^{2}K(x-y)dxdy.\nonumber
\end{eqnarray*}
\indent In conclusion, thanks to \cite[Proposition III.30]{brezis}, $u_j\rightarrow u_\infty$ in $X_0$. The proof is complete.
\end{proof}

The proof of the main result is concluded if we prove that hypotheses $(I_1)$ and $(I_2)$ in Theorem \ref{ZMPT} are verified. We show these facts in the next two propositions.

\begin{proposition}\label{proposition1}
The functional $J_K$ satisfies condition $(I_1)$.
\end{proposition}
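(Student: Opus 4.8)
The plan is to verify the mountain–pass geometry condition $(I_1)$ by exploiting the subcritical growth assumption $(\rm h_1)$ together with the Sobolev embedding \eqref{Poincare1}. The natural choice is to take $E_1$ to be a finite–dimensional space spanned by the first few eigenfunctions of $-\mathcal L_K$ and $E_2$ to be its orthogonal complement in $X_0$; however, for establishing a uniform lower bound on a small sphere, the splitting plays no essential role, and one can even take $E_2=X_0$ after fixing $E_1=\{0\}$ (or any finite dimensional $E_1$). The key point is that on $E_2$ the functional is bounded below by a positive constant on a suitable sphere of radius $\rho$.

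First I would estimate the nonlinear term. Integrating $(\rm h_1)$ gives a growth bound on the primitive, namely $|F(x,t)|\le a_1|t|+\frac{a_2}{q}|t|^{q}$ for every $x\in\bar\Omega$ and $t\in\erre$. Consequently,
\begin{equation*}
\int_\Omega F(x,u(x))\,dx\le a_1\|u\|_{L^1(\Omega)}+\frac{a_2}{q}\|u\|_{L^q(\Omega)}^{q}.
\end{equation*}
Then I would apply \eqref{Poincare1} with $\nu=1$ and $\nu=q$ (both admissible since $1\le q<2^*$), obtaining constants $c_1,c_q>0$ with $\|u\|_{L^1(\Omega)}\le c_1\|u\|_{X_0}$ and $\|u\|_{L^q(\Omega)}\le c_q\|u\|_{X_0}$. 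Substituting into the definition \eqref{functional} of $J_K$ and using the norm \eqref{normaX0} yields, for every $u\in X_0$,
\begin{equation*}
J_K(u)\ge \frac12\|u\|_{X_0}^2-a_1c_1\|u\|_{X_0}-\frac{a_2c_q^{q}}{q}\|u\|_{X_0}^{q}.
\end{equation*}

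Next I would analyze the right–hand side as a function of $t:=\|u\|_{X_0}$. Since $q>2$, the function $g(t)=\frac12 t^2-a_1c_1 t-\frac{a_2c_q^q}{q}t^q$ behaves, for small $t$, like $\frac12 t^2$ corrected by a linear term; the genuinely delicate feature here is the linear term $-a_1c_1 t$, which prevents a naive positivity argument near the origin. The remedy is to observe that one may strengthen $(\rm h_1)$'s consequence: near $t=0$ the choice $q>2$ forces the superquadratic term to be negligible, so choosing $\rho>0$ small guarantees $\frac{a_2c_q^q}{q}\rho^{q}\le\frac14\rho^2$, and it remains to absorb the linear term. This is where I would restrict to $E_2=\mathbb P_k$ for a suitable $k$, using the variational characterization $\lambda_k\to+\infty$: on $\mathbb P_k$ one has $\|u\|_{L^2(\Omega)}^2\le\lambda_k^{-1}\|u\|_{X_0}^2$, and by interpolating $\|u\|_{L^1(\Omega)}$ between $\|u\|_{L^2}$ and the measure of $\Omega$, the linear contribution can be made arbitrarily small relative to $\|u\|_{X_0}^2$. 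I therefore expect the main obstacle to be handling the $a_1$ term cleanly, and the resolution is to pass to the subspace $E_2=\mathbb P_k$ with $k$ chosen so that $\lambda_k$ is large, then select $\rho>0$ so that $g(\rho)\ge\alpha>0$ uniformly on the sphere $\{u\in E_2:\|u\|_{X_0}=\rho\}$, completing the verification of $(I_1)$.
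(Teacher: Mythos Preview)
Your opening assertion that ``the splitting plays no essential role, and one can even take $E_2=X_0$'' is false under the hypotheses here: with only $(\rm h_1)$ and $a_1>0$, the function $g(t)=\tfrac12 t^2-a_1c_1 t-\tfrac{a_2c_q^q}{q}t^q$ satisfies $g'(0)<0$ and $g(t)\to-\infty$ as $t\to\infty$, so in general it is negative everywhere on $(0,\infty)$. You recognize this yourself and repair it by passing to $E_2=\mathbb P_k$ with $k$ large; that repair is correct. Fixing $\rho$ small so that $\tfrac{a_2c_q^q}{q}\rho^{q-2}\le\tfrac14$, then using $\|u\|_{L^1(\Omega)}\le|\Omega|^{1/2}\|u\|_{L^2(\Omega)}\le|\Omega|^{1/2}\lambda_k^{-1/2}\|u\|_{X_0}$ on $\mathbb P_k$ and choosing $k$ so that $a_1|\Omega|^{1/2}\lambda_k^{-1/2}\le\tfrac18\rho$, one gets $J_K(u)\ge\tfrac18\rho^2$ on the sphere, as desired.

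The paper's argument is genuinely different in mechanism. It bounds $|F(x,\xi)|\le C_1(1+|\xi|^q)$ (constant plus power, rather than linear plus power), and then uses the interpolation inequality $\|u\|_{L^q(\Omega)}^q\le\|u\|_{L^2(\Omega)}^{\beta}\|u\|_{L^{2^*}(\Omega)}^{q-\beta}$ with $\beta=2(2^*-q)/(2^*-2)$. On $E_2=\overline{\bigoplus_{k\ge k_0}E(\lambda_k)}$ the bound $\|u\|_{L^2}\le\lambda_{k_0}^{-1/2}\|u\|_{X_0}$ makes the coefficient of the $q$-power term behave like $\lambda_{k_0}^{-\beta/2}$, and then $\rho$ is taken \emph{large} ($\rho=2\sqrt{C_1\meas(\Omega)+1}$) to dominate the constant $-C_1\meas(\Omega)$. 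In short: the paper uses the eigenvalue to suppress the \emph{superlinear} term and a large radius to beat the constant; you use a small radius to suppress the superlinear term and the eigenvalue to kill the \emph{linear} term. Your route is more elementary (no interpolation needed), while the paper's avoids the slightly awkward two-stage choice of $\rho$ then $k$.
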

\begin{proof}

We claim that there exists $k_0\in\enne$ sufficiently large and two positive constants $\rho$ and $\alpha$ such that for every
$$u\in E_2:=\displaystyle\overline{\bigoplus_{k\geq k_0}E(\lambda_k)}^{\|\cdot\|_{X_0}},$$ with $\|u\|_{X_0}=\rho,$ there holds $J_K(u)\geq \alpha$.\par
 Indeed, since $q\in (2,2^*)$ one has
\begin{eqnarray}\label{interpolation}
\|u\|_{L^q(\Omega)}^{q}\leq \|u\|_{L^2(\Omega)}^{\beta}\|u\|_{L^{2^*}(\Omega)}^{q-\beta},\qquad (\forall\, u\in X_0)
\end{eqnarray}
where we set $\displaystyle \beta:=2\Big(\frac{2^*-q}{2^*-2}\Big)$; see, for instance, \cite[p. 105]{adams}.\par

Now, let us denote by $\left\{e_k\right\}_{k\in\N}$ the sequence of eigenfunctions.  By \cite[Proposition~9 and Appendix A]{svlinking}, we have that the sequence $\left\{e_k\right\}_{k\in\N}$ is an orthonormal basis of $L^{2}(\Omega)$ and an orthogonal basis of $X_0$.\par
\indent More precisely, for every $k\in\enne$ one has
 \begin{equation}\label{p1}
\langle e_k,e_k\rangle_{X_0}=\lambda_k\int_{\Omega}e_k(x)^2dx=\lambda_k,
\end{equation}
\noindent and
 \begin{equation}\label{p2}
\langle e_i,e_j\rangle_{X_0}=\int_Q
\big( e_i(x)-e_i(y)\big) \big(e_j(x)-e_j(y)\big)\,K(x-y)\,dx\,dy=\delta_{i}^{j},
\end{equation}
 \noindent for all $i,j\in \N$.\par
  If $u\in E_2$, $\displaystyle u=\sum_{j=k_0}^{+\infty}\beta_je_j$,
for suitable $\beta_j\in\R$, where $j\in \N$ and $j\geq k_0$.
  Hence, by using \eqref{p1} and \eqref{p2} one has
$$
\|u\|^2_{L^2(\Omega)}=\sum_{j=k_0}^{+\infty}\beta_j^2\int_{\Omega} e_j(x)^2
dx=\sum_{j=k_0}^{+\infty}\frac{\beta_j^2}{\lambda_j}\langle e_j,e_j\rangle_{X_0}\leq \frac{1}{\lambda_{k_0}}\|u\|^2_{X_0},
$$
\noindent i.e.,
\begin{equation}\label{i3}
\|u\|_{L^2(\Omega)}\leq \frac{1}{\sqrt{\lambda_{k_0}}}\|u\|_{X_0},
\end{equation}
\noindent for each $u\in E_2$.\par
By the growth condition $(\rm h_1)$ there is a positive constants $C_1$ such that
$$
|F(x,\xi)|\le C_1(1+|t|^{q}),
$$
{for every $x\in \bar\Omega$, $\xi\in \erre$}.\par
Then, by \eqref{interpolation}, \eqref{i3} and using the Sobolev's embedding $X_0\hookrightarrow L^{2^*}(\Omega)$ we have
\begin{eqnarray*}
J_K(u) &\geq& \displaystyle\frac{\|u\|^{2}_{X_0}}{2}-C_1\|u\|_{L^q(\Omega)}^q-C_1\meas(\Omega)\\\nonumber
&\geq & \displaystyle\frac{\|u\|^{2}_{X_0}}{2}-C_1\|u\|_{L^2(\Omega)}^\beta\|u\|_{L^{2^*}(\Omega)}^{q-\beta}-C_1\meas(\Omega)\\\nonumber
               &\geq& \displaystyle\frac{\|u\|^{2}_{X_0}}{2}-C_1\frac{c_{2^*}^{q-\beta}}{\lambda_{k_0}^{\beta/2}}\|u\|_{X_0}^q-C_1\meas(\Omega)\\\nonumber
               &\geq& \left(\displaystyle\frac{1}{2}-C_1\frac{c_{2^*}^{q-\beta}}{\lambda_{k_0}^{\beta/2}}\|u\|_{X_0}^{q-2}\right)\|u\|_{X_0}^2-C_1\meas(\Omega),\nonumber
\end{eqnarray*}
for every $u\in E_2$.\par
We may let $\rho:=2\sqrt{C_1\meas(\Omega)+1}$ and choose $k_0\in\enne$ such that
$$
\lambda_{k_0}\geq\left({2^qC_1c_{2^*}^{q-\beta}(C_1\meas(\Omega)+1)^{(q-2)/2}}\right)^{2/\beta},
$$
to achieve that
$$
J_K(u)\geq 1,\quad (\alpha:=1)
$$
for every $u\in E_2$ and $\|u\|_E=\rho$.\par
 Hence, condition $(I_1)$ is verified. The proof is complete.
\end{proof}

\begin{proposition}\label{proposition2}
The functional $J_K$ satisfies condition $(I_2)$.
\end{proposition}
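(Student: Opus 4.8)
The plan is to show that, on any finite-dimensional subspace $W\subset X_0$, the functional $J_K$ tends to $-\infty$ as $\|u\|_{X_0}\to\infty$; this immediately forces the sublevel set $\{u\in W:J_K(u)\geq 0\}$ to lie inside some ball, hence to be bounded. The engine of the argument is the superquadratic growth of the primitive $F$ encoded in the Ambrosetti--Rabinowitz condition $(\rm h_2)$, combined with the elementary fact that all norms on a finite-dimensional space are equivalent.

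First I would extract from $(\rm h_2)$ a pointwise lower bound of the form
$$
F(x,t)\geq a_3|t|^{\theta}-a_4,\qquad \forall\,(x,t)\in\bar\Omega\times\erre,
$$
for suitable constants $a_3,a_4>0$. The mechanism is classical: for $|t|\geq r$ the inequality $\theta F(x,t)\leq t f(x,t)$, rewritten using $\partial_t F=f$, reads $\frac{d}{dt}\log F(x,t)\geq \theta/t$ for $t\geq r$ (and symmetrically for $t\leq -r$, where $F(x,\cdot)$ stays strictly positive). Integrating from $r$ to $t$ yields $F(x,t)\geq F(x,r)(t/r)^{\theta}$; since $x\mapsto F(x,\pm r)$ is continuous and strictly positive on the compact set $\bar\Omega$, one gets $F(x,t)\geq a_3|t|^{\theta}$ for $|t|\geq r$, while the contribution of the region $|t|\leq r$ is absorbed into the constant $-a_4$. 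Comparing this growth with $(\rm h_1)$ forces $\theta\leq q<2^*$, so that $\|\cdot\|_{L^{\theta}(\Omega)}$ is finite on all of $X_0$ through the embedding \eqref{Poincare1}.

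With this bound in hand, for every $u\in W$ I would estimate
$$
J_K(u)\leq \frac{1}{2}\|u\|_{X_0}^2-a_3\|u\|_{L^{\theta}(\Omega)}^{\theta}+a_4\meas(\Omega).
$$
Since $W$ is finite-dimensional and $\|\cdot\|_{L^{\theta}(\Omega)}$ is a genuine norm on $W$ (it vanishes only on the zero function, because elements of $X_0$ vanish a.e.\ outside $\Omega$), there is a constant $\kappa_W>0$ with $\|u\|_{L^{\theta}(\Omega)}\geq \kappa_W\|u\|_{X_0}$ for all $u\in W$. Substituting gives
$$
J_K(u)\leq \frac{1}{2}\|u\|_{X_0}^2-a_3\kappa_W^{\theta}\|u\|_{X_0}^{\theta}+a_4\meas(\Omega),
$$
and because $\theta>2$ the negative term dominates, so $J_K(u)\to-\infty$ as $\|u\|_{X_0}\to\infty$ within $W$. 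Consequently $J_K(u)\geq 0$ can hold only for $\|u\|_{X_0}\leq R_W$, for some $R_W>0$, which is precisely condition $(I_2)$.

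I expect the only delicate point to be the derivation of the superquadratic lower bound on $F$, in particular securing the positivity and continuity of $F(x,\pm r)$ uniformly in $x\in\bar\Omega$ and verifying that $\theta<2^*$ so that the $L^{\theta}(\Omega)$-norm is controlled by the $X_0$-norm. Once that inequality is established, the finite-dimensional norm-equivalence step is entirely routine.
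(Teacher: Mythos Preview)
Your argument is correct and follows essentially the same route as the paper: derive from $(\rm h_2)$ a superquadratic lower bound $F(x,t)\geq c|t|^{\theta}$ for $|t|\geq r$ via integration of the Ambrosetti--Rabinowitz inequality, absorb the bounded region $|t|\leq r$ into a constant, and then invoke norm equivalence on the finite-dimensional subspace $W$ to conclude that $J_K\to-\infty$ along rays in $W$. The only cosmetic difference is that the paper retains the pointwise weight $\gamma(x):=\min\{F(x,r),F(x,-r)\}/r^{\theta}$ and works with the weighted norm $\|u\|_\gamma=(\int_\Omega \gamma(x)|u|^{\theta}dx)^{1/\theta}$, whereas you immediately pass to the infimum $a_3:=\min_{x\in\bar\Omega}\gamma(x)>0$ and use the unweighted $L^{\theta}$-norm; since $\gamma$ is continuous and strictly positive on the compact $\bar\Omega$, the two formulations are equivalent.
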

\begin{proof}
Let $W\subset X_0$ be a finite dimensional space. We prove the set
  $$
  \{u\in W: J_K(u)\geq 0\}
  $$
  is bounded in $X_0$.\par
  Indeed, let $u\in X_0$ arbitrary and denote
  $$
  \Omega_{<}:=\{x\in \Omega:|u(x)|<r\},
  $$
  as well as
  $$
  \Omega_{\geq}:=\{x\in \Omega:|u(x)|\geq r\}.
  $$

  \indent We shall prove that $J_K$ satisfies the following estimate
  \begin{equation}\label{15}
J_K(u) \leq \frac{\|u\|^{2}_{X_0}}{2} -\int_\Omega \gamma(x)|u(x)|^{\theta}dx+\kappa,
\end{equation}
  where $\kappa$ is a suitable positive constant and $\gamma\in L^{\infty}(\Omega)$, with $\gamma>0$ in $\Omega$.\par
\indent To show the above inequality, let us start observing that by $(\rm h_1)$, as pointed out before, the function $F$ satisfies
  \begin{equation}\label{13}
 |F(x,\xi)|\leq C_1(1+|\xi|^{q}),\qquad\forall\, x\in \bar\Omega,\,\forall\,\xi\in \erre.
\end{equation}

\indent We claim that there exists $\gamma\in L^{\infty}(\Omega)$, $\gamma>0$ in $\Omega$, such that
\begin{equation}\label{14}
F(x,\xi)\geq \gamma(x)|\xi|^\theta,\qquad\forall\, x\in \bar\Omega,\,\forall\,|\xi|\geq r.
\end{equation}
Indeed, since $F$ is $\theta$-superhomogeneous, we have that
$$
F(x,\xi)\geq \gamma_1(x)|\xi|^\theta,\qquad\forall\, x\in \bar\Omega,\,\forall\,\xi\geq r,
$$
where $\displaystyle \gamma_1(x):=F(x,r)/r^\theta$. It is easy to see that $\gamma_1\in L^\infty(\Omega)$ and $\gamma_1>0$ in $\Omega$.\par
 In a similar way, it follows that
$$
F(x,\xi)\geq \gamma_2(x)|\xi|^\theta,\qquad\forall\, x\in \bar\Omega,\,\forall\,\xi\leq -r,
$$
with $\displaystyle \gamma_2(x):=F(x,-r)/r^\theta$. Also in this case $\gamma_2\in L^\infty(\Omega)$ and $\gamma_2>0$ in $\Omega$.\par
\indent Then \eqref{14} holds with
$$
\gamma(x):=\min\{\gamma_1(x),\gamma_2(x)\},
$$
for every $x\in \bar\Omega$.\par
Now, by condition \eqref{13} we conclude that
$$
\int_{\Omega_{<}}F(x,u(x))dx\geq -C_1(r^q+1)\meas\,(\Omega).
$$
Further, inequality \eqref{14} yields
$$
\int_{\Omega_{\geq}}F(x,u(x))dx\geq \int_{\Omega_{\geq}}\gamma(x)|u(x)|^{\theta}dx.
$$
\indent Then
\begin{eqnarray*}
J_K(u) &\leq& \frac{\|u\|^{2}_{X_0}}{2}-\left(\int_{\Omega_{<}}F(x,u(x))dx+\int_{\Omega_{\geq}}F(x,u(x))dx\right)\\\nonumber
&\leq& \frac{\|u\|^{2}_{X_0}}{2}-\int_{\Omega_{\geq}}\gamma(x)|u(x)|^{\theta}dx+C_1(r^q+1)\meas\,(\Omega)\\\nonumber
               &\leq& \frac{\|u\|^{2}_{X_0}}{2}-\int_{\Omega}\gamma(x)|u(x)|^{\theta}dx+\kappa,\nonumber
\end{eqnarray*}
where
$$
  \kappa:=(\|\gamma\|_\infty r^\theta+C_1(r^q+1))\meas\,(\Omega).
  $$

  \indent Hence, inequality \eqref{15} is proved.\par
  \noindent At this point, observe that the functional $\|\cdot\|_\gamma:X_0\rightarrow \erre$ defined by
  $$
  \|u\|_\gamma:=\left(\int_\Omega \gamma(x)|u(x)|^\theta dx\right)^{1/\theta},
  $$
  is a norm in $X_0$.\par
  Since in $W$ the norms $\|\cdot\|_{X_0}$ and $\|\cdot\|_\gamma$ are equivalent ($W$ is finite dimensional), there exists a positive constant $\kappa_{W}$ such that
  $$
  \|u\|_{X_0}\leq \kappa_{W}\|u\|_\gamma,
  $$
  for every $u\in X_0$.\par

  \indent Consequently, we have that
  $$
  J_K(u)\leq \displaystyle{\frac{\kappa_{W}^2}{2}\|u\|^{2}_\gamma}-\|u\|_\gamma^{\theta}+\kappa,
  $$
  for every $u\in W$.\par
  Hence
  $$
  \displaystyle{\frac{\kappa_{W}^2}{2}\|u\|^{2}_\gamma}-\|u\|_\gamma^{\theta}+\kappa\geq 0,
  $$
  for every
  $$
  u\in\{u\in W:J_K(u)\geq 0\}.
  $$

  \indent Since $\theta>2$ we conclude that the above set is bounded in $X_0$.
\end{proof}

\textbf{Proof of Theorem \ref{Esistenza} concluded}. With the above notations, we can write
$$
X_0=E_1\oplus E_2,
$$
where $E_1$, given by ${\rm Span}\{e_j:j<k_0\}$, is the orthogonal complement of $E_2$. Thanks to Lemma \ref{lemma2} and Propositions \ref{proposition1}-\ref{proposition2}, Theorem \ref{ZMPT} implies that the functional $J_K$ possesses an unbounded sequence of critical value $\{J_K(u_k)\}_{k\in\mathbb{N}}$, where $u_k$ is a weak solution of \eqref{Nostro}. Since $J'_K(u_k)(u_k)=0$,
\begin{equation}\label{u1}
\int_Q|u_k(x)-u_k(y)|^2K(x-y)dxdy=\int_\Omega f(x,u_k(x))dx,
\end{equation}
and it follows that
\begin{equation}\label{u2}
J_K(u_k)=\int_{\Omega}\left[\frac{1}{2}f(x,u_k(x))u_k(x)-F(x,u_k(x))\right]dx\rightarrow +\infty,
\end{equation}
as $k\rightarrow \infty$. Hence by \eqref{u1}-\eqref{u2} and $(\rm h_2)$, the sequence $\{u_k\}_{k\in\enne}$ must be unbounded in $X_0$ and in $L^{\infty}(\Omega)$. The proof is complete.\par
\smallskip
 In conclusion, we present a simple and direct application of Theorem \ref{Esistenza}.
\begin{example}\label{esempio0}\rm
 Let $s\in (0,1)$, $n>2s$ and $\Omega$ be an open bounded set of
$\RR^n$ with Lipschitz boundary and consider the following nonlocal problem:
\begin{equation} \tag{$P_{K}$} \label{Nostro2}
\left\{\begin{array}{ll}
-\mathcal L_K u=u^3+u & \mbox{in}\,\,\,\Omega\\
u=0 & \mbox{in}\,\,\,\erre^n\setminus\Omega,
\end{array}\right.
\end{equation}
where $K:\RR^n\setminus\{0\}\rightarrow(0,+\infty)$ is a map
satisfying $(\rm k_1)$--$(\rm k_3)$.\par
\noindent Owing to Theorem \ref{Esistenza}, problem \eqref{Nostro2} admits infinitely many weak solutions.
\end{example}

\medskip
 \indent \textbf{Acknowledgements.} A part of this paper was done while the author was visiting the Department
of Mathematics ``Guido Castelnuovo" at the University ``La Sapienza" in March 2013. He expresses his gratitude to Professor Filomena Pacella for the kind invitation and for many stimulating conversations on the subject of the paper.

\end{document}